\newtheorem{theorem}{Theorem}[section]
\newtheorem{lemma}[theorem]{Lemma}
\newtheorem{question}[theorem]{Question}
\numberwithin{equation}{section}
\begin{document}

\title[Connections and restrictions to curves]{Connections and restrictions to curves}

\author[I. Biswas]{Indranil Biswas}

\address{School of Mathematics, Tata Institute of Fundamental
Research, Homi Bhabha Road, Mumbai 400005, India}

\email{indranil@math.tifr.res.in}

\author[S. Gurjar]{Sudarshan Gurjar}

\address{Department of Mathematics, Indian Institute of Technology, Mumbai 400076, 
India}

\email{sgurjar@math.iitb.ac.in}

\subjclass[2010]{53B15, 14H60, 14M22}

\keywords{Connection, semistability, Atiyah bundle, rationally connectedness.}

\begin{abstract}
We construct a vector bundle $E$ on a smooth complex projective surface $X$
with the property that the restriction of $E$ to any smooth closed curve in $X$
admits an algebraic connection while $E$ does not admit any algebraic connection.
\end{abstract}

\maketitle

\section{Introduction}

Let $X$ be an irreducible smooth complex projective variety with cotangent bundle
$\Omega^1_X$ and $E$ a vector bundle on $X$. The coherent sheaf of local sections of $E$ will
also be denoted by $E$. 
A connection on $E$ is a $k$-linear homomorphism of sheaves $D\,:\, E\,\longrightarrow
\,E \otimes \Omega^1_X$ satisfying the Leibniz identity which says that $D(fs)\,=\,fD(s)+ s
\otimes df$, where $s$ is a local section of $E$ and $f$ is a locally defined regular function.

Consider the sheaf of differential operators ${\rm Diff}^i_X(E, E)$, of order $i$ on $E$, and the
associated symbol homomorphism $\sigma\, :\, {\rm Diff}^1_X(E, E)\,\longrightarrow\, \text{End}(E)
\otimes TX$. The inverse image
$$
\text{At}(E)\,:=\, \sigma^{-1}(\text{Id}_E\otimes TX)
$$
is the Atiyah bundle for $E$. The resulting short exact sequence
\begin{equation}\label{e1}
0\,\longrightarrow\, {\rm Diff}^0_X(E, E)\,=\, \text{End}(E) \,\longrightarrow\,
\text{At}(E)\,\stackrel{\sigma}{\longrightarrow}\, TX \,\longrightarrow\,0
\end{equation}
is called the Atiyah exact sequence for $E$. A connection on $E$ is a splitting of \eqref{e1}.
We refer the reader to \cite{At} for the details, in particular, see \cite[p.~187, Theorem 1]{At}
and \cite[p.~194, Proposition 9]{At}.

When $X$ is a complex curve, Weil and Atiyah proved the following \cite{We}, \cite{At}:

A vector bundle $V$ on an irreducible smooth projective curve defined over $\mathbb C$ admits
a connection if and only if the degree of each indecomposable component of $V$ is zero.

This was first proved in \cite{We}; see also \cite[p.~69, T\'HEOR\`EME DE WEIL]{Gr}
for an exposition of it.
The above criterion also follows from \cite[p.~188, Theorem 2]{At}, \cite[p.~201, Theorem 8]{At}
and \cite[Theorem 10]{At}. 

A semistable vector bundle $V$ on a smooth complex projective variety $X$ admits a 
connection if all the rational Chern classes of $E$ vanish \cite[p.~40, 
Corollary~3.10]{Si}. On the other hand, a vector bundle $W$ on $X$ is semistable if 
and only if the restriction of $W$ to a general complete intersection curve,
which is an intersection of hyperplanes of sufficiently large degrees, is semistable
\cite[p.~637, Theorem~1.2]{Fl}, \cite[p.~221, Theorem~6.1]{MR}. On the other hand, any
vector bundle $E$ whose restriction to every curve is semistable actually satisfies
very strong conditions \cite{BB}; for example, if $X$ is simply connected, then $E$
must be of the form $L^{\oplus r}$ for some line bundle $L$.

The following is a natural question to ask:

\begin{question}\label{q1}
Let $E$ be a vector bundle on $X$ such that for every smooth closed curve $C\, \subset\, X$, the
restriction $E\vert_C$ admits a connection. Does $E$ admit a connection?
\end{question}

Our aim is to show that in general the above vector bundle $E$ does not admit a connection.

To produce an example of such a vector bundle, we construct a smooth complex
projective surface $X$ with
${\rm Pic}(X)\,=\, \mathbb{Z}$ such that $X$ admits an ample line bundle $L_0$ with
$H^1(X,\, L_0)\,\not=\, 0$. Since ${\rm Pic}(X)\,=\, \mathbb{Z}$, the ample line bundles
on $X$ are naturally parametrized by positive integers. Let $L$ be the smallest ample line
bundle (with respect to this parametrization) with the property that
$H^1(X,\, L)\,\not=\, 0$. Let $E$ be a nontrivial extension
$$
0 \,\longrightarrow\, L \,\longrightarrow\, E \,\longrightarrow\, {\mathcal O}_X
 \,\longrightarrow\, 0\, .
$$
We prove that the vector bundle ${\rm End}(E)$ has the property that
the restriction of it to every smooth closed curve in $X$ admits a connection, while
${\rm End}(E)$ does not admit a connection; see Theorem \ref{thm1}.

A surface $X$ of the above type is constructed by taking a
hyper-K\"ahler $4$--fold $X'$ with ${\rm Pic}(X') \,=\, \mathbb{Z}$.
Let $Y\, \subset\, X'$ be a smooth ample
hypersurface such that $H^j(X',\, {\mathcal O}_{X'}(Y))\,=\, 0$ for $j\,=\,1,\,2$, and
let $Z$ be a very general ample hypersurface of $X'$ such that
$H^j(X',\,{\mathcal O}_{X'}(Z))\,=\,0$ for $j\,=\,1,\, 2$ and
$H^2(X',\, {\mathcal O}_{X'}(Z-Y))\,=\, 0$. Now take the surface $X$ to be
the intersection $Y\cap Z$.

\section{Construction of a surface}\label{se2}

We will construct a smooth complex projective surface $S$ with Picard 
group $\mathbb{Z}$ that has an ample line bundle $L$ with $H^1(S,\,L) \,\neq\, 0$. 

Let $X$ be a hyper-K\"ahler $4$--fold with Picard group 
$\mathbb{Z}$. For example a sufficiently general deformation of ${\rm Hilb}^2(M)$, where $M$ 
is a polarized $K3$ surface, will have this property. Let $Y\, \subset\, X$ be a smooth ample
hypersurface. Note that the vanishing theorem of Kodaira says
that
\begin{equation}\label{kvy}
H^j(X,\, {\mathcal O}_X(Y))\,=\, 0
\end{equation}
for all $j\,>\,0$, because $K_X$ is trivial \cite{Ko}. 
Let $Z$ be a very general ample hypersurface of $X$ such that
both the line bundles ${\mathcal O}_X(Z)$ and ${\mathcal O}_X(Z-Y)$ are ample.
In view of the vanishing theorem of Kodaira, the ampleness of ${\mathcal O}_X(Z)$ implies
that
\begin{equation}\label{kvz}
H^j(X,\, {\mathcal O}_X(Z))\,=\, 0
\end{equation}
for all $j\,>\,0$, while that of ${\mathcal O}_X(Z-Y)$ implies that
\begin{equation}\label{kvzy}
H^j(X,\, {\mathcal O}_X(Z-Y))\,=\, 0
\end{equation}
for all $j\,>\,0$.
Let $$\iota\,:\, S \,:=\, Y\cap Z \, \hookrightarrow\, X$$ be 
the intersection and $$L\,:=\, {\mathcal O}_X(Y)\vert_S$$
the restriction of it. Note that $L$ is ample.

Let ${\mathcal I}\,:=\, {\mathcal O}_X(-S)\, \subset\, {\mathcal O}_X$ be the ideal
sheaf for $S$. Tensoring the
exact sequence
$$
0 \,\longrightarrow\, {\mathcal I} \,\longrightarrow\, {\mathcal O}_X
\,\longrightarrow\, \iota_* {\mathcal O}_S \,\longrightarrow \, 0
$$
by ${\mathcal O}_X(Y)$ we get an exact sequence
\begin{equation}\label{e2}
 0 \,\longrightarrow\, \mathcal{I}(Y) \,\longrightarrow\, {\mathcal O}_X(Y)
\,\longrightarrow\, 
\iota_*L \,\longrightarrow \, 0\, .
\end{equation}
The natural inclusion of ${\mathcal O}_X(-Z)$ in ${\mathcal O}_X$ and
${\mathcal O}_X(Y-Z)$ together produce an inclusion of ${\mathcal O}_X(-Z)$ in
${\mathcal O}_X\oplus {\mathcal O}_X(Y-Z)$. Consequently, we have an exact sequence 
\begin{equation}\label{e3} 
0\,\longrightarrow\, {\mathcal O}_X(-Z) \,\longrightarrow\, {\mathcal O}_X\oplus 
{\mathcal O}_X 
(Y-Z) \,\longrightarrow\, \mathcal{I}(Y) \,\longrightarrow\, 0\, .
\end{equation}

In view of \eqref{kvy}, the connecting homomorphism 
\begin{equation}\label{e4}
H^1(S,\, L) \,{\longrightarrow}\, H^2(X,\, \mathcal{I}(Y))
\end{equation}
in the long exact sequence of cohomologies associated to \eqref{e2} is an 
isomorphism.

Since the canonical line bundle of $X$ is trivial, Serre duality gives
$$
H^{2+j}(X,\, {\mathcal O}_X(-Z))^*\, =\, H^{2-j}(X,\,{\mathcal O}_X(Z))\, .
$$
So using \eqref{kvz} we conclude that
the left-hand side vanishes for $j\,=\, 0,\, 1$. Again by Serre duality 
$$H^2(X,\,{\mathcal O}_X(Y- Z))^*\,=\, H^2(X,\, {\mathcal O}_X(Z-Y))\,=\, 0$$
(see \eqref{kvzy}). 

Thus in the long exact sequence of cohomologies associated to \eqref{e3}, we have
$$
H^{2}(X,\, {\mathcal O}_X(-Z))\,=\, 0\,=\, H^{2+j}(X,\, {\mathcal O}_X(-Z))\, ,
\ \ \text{ and }\ \ H^2(X,\,{\mathcal O}_X(Y- Z))\,=\, 0\, .
$$
Hence this long exact sequence of cohomologies associated to \eqref{e3} gives an isomorphism
$$
H^2(X,\, {\mathcal O}_X) \, \stackrel{\sim}{\longrightarrow}\, H^2(X,\,\mathcal{I}(Y))\, ;
$$
so combining this with the isomorphism in \eqref{e4} it now follows that
$H^1(S,\, L)$ is isomorphic to $H^2(X,\, {\mathcal O}_X)$. We have $\dim H^2(X,\, {\mathcal O}_X)
\,=\,1$, so
\begin{equation}\label{f1}
\dim H^1(S,\, L)\,=\,1\, .
\end{equation}

By Grothendieck--Lefschetz hyperplane theorem for
Picard group, the restriction map ${\rm Pic}(X) \, \longrightarrow\, 
{\rm Pic}(Y)$ is an isomorphism \cite[Expose\'e XII]{SGA}; in fact, a weaker version given in
\cite[Chapter~IV, p.~179, Corollary~3.2]{Ha} suffices for our purpose.
By the generalized Noether--Lefschetz theorem (see 
\cite[p.~121, Theorem~5.1]{Jo}), the restriction map ${\rm Pic}(Y) \,\longrightarrow\, {\rm 
Pic}(S)$ is also an isomorphism. Thus ${\rm Pic}(S)$ is isomorphic to $\mathbb{Z}$. Combining 
this with \eqref{f1} it follows that the surface $S$ has the desired properties.

\section{Question \ref{q1} in special cases}

In this section will will
first use the construction in Section \ref{se2} to show that Question \ref{q1}
in the introduction has a negative answer in general. Then we will show that in some
particular cases the answer is affirmative.

\subsection{Example with a negative answer}

We will construct a smooth projective surface $X$ and a vector bundle $E$ on it that does not 
admit any connection while the restriction of $E$ to every smooth curve in $X$ admits a 
connection.

Let $X$ be a smooth complex projective surface with ${\rm Pic}(X)\,=\, \mathbb{Z}$ that admits 
an ample line bundle $L$ with $H^1(X,\, L)\,\not=\, 0$; we saw in Section \ref{se2} that such a 
surface exists. Let ${\mathcal O}_X(1)$ denote the ample generator of ${\rm Pic}(X)$. Then $L 
\, =\, {\mathcal O}_X(r)\,=\, {\mathcal O}_X(1)^{\otimes r}$ with $r$ positive. We choose $L$ 
with smallest possible $r$. Since ${\rm Pic}(X)\,=\, \mathbb{Z}$, we have $H^1(X,\, {\mathcal 
O}_X)\,=\, 0$ because $H^1(X,\, {\mathcal O}_X)\,=\, 0$ is the (abelian) Lie algebra
of the Lie group ${\rm Pic}(X)$. On the other hand,
the Kodaira vanishing theorem says that $H^1(X,\, {\mathcal O}_X(-k))\,=\, 
0$ for all $k\, >\, 0$. Therefore, it follows that
\begin{equation}\label{f2}
H^1(X,\, L\otimes {\mathcal O}_X(-d)) \,=\, 0\, , \forall ~ \ d\,>\,0\, . 
\end{equation}
Let
\begin{equation}\label{e5}
 0 \,\longrightarrow\, L \,\longrightarrow\, E \,\longrightarrow\, {\mathcal O}_X
 \,\longrightarrow\, 0
\end{equation}
be the non-split extension corresponding to a non-zero element in $H^1(X,\, L)$.

\begin{theorem}\label{thm1}
The vector bundle ${\rm End}(E)\,=\, E\otimes E^*$ in \eqref{e5} has the property that
the restriction of it to every smooth closed curve in $X$ admits a connection. The
vector bundle ${\rm End}(E)$ does not admit a connection.
\end{theorem}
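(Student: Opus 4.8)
The plan is to establish the two assertions separately, starting with the (shorter) non-existence of a connection on $X$. For this second statement I would obstruct connections by Chern classes: recall (Atiyah \cite{At}) that a vector bundle on a smooth projective variety admitting an algebraic connection has all of its Chern classes equal to zero in $H^*(X,\mathbb{C})$, so since $X$ is a surface it suffices to exhibit a single non-vanishing rational Chern class of ${\rm End}(E)$. From \eqref{e5} one reads off $c_1(E)=c_1(L)$ and $c_2(E)=0$. Writing ${\rm End}(E)={\mathcal O}_X\oplus{\rm ad}(E)$ with ${\rm ad}(E)$ the rank-$3$ trace-free part, one computes $c_2({\rm End}(E))=c_2({\rm ad}(E))=4c_2(E)-c_1(E)^2=-c_1(L)^2$. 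As $L$ is ample on the surface $X$, the self-intersection number $c_1(L)^2=L\cdot L$ is a positive integer, so $c_2({\rm End}(E))\neq 0$ in $H^4(X,\mathbb{Q})\cong\mathbb{Q}$. Hence ${\rm End}(E)$ admits no connection.

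For the first statement I would fix a smooth closed curve $C\subset X$ and let $e\in H^1(X,L)$ be the class defining the extension \eqref{e5}. Since ${\rm Pic}(X)=\mathbb{Z}$ we may write ${\mathcal O}_X(C)={\mathcal O}_X(m)$ with $m>0$; tensoring $0\to{\mathcal O}_X(-C)\to{\mathcal O}_X\to{\mathcal O}_C\to 0$ with $L$ and passing to cohomology, the vanishing $H^1(X,L\otimes{\mathcal O}_X(-m))=0$ from \eqref{f2} shows that the restriction homomorphism $H^1(X,L)\to H^1(C,L\vert_C)$ is injective. Consequently $e\vert_C\neq 0$, so $E\vert_C$ is again a non-split extension $0\to L\vert_C\to E\vert_C\to{\mathcal O}_C\to 0$, and $\det(E\vert_C)=L\vert_C$ has degree $d:=\deg(L\vert_C)>0$ because $L$ is ample.

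By the Weil--Atiyah criterion recalled in the introduction, it now suffices to prove that every indecomposable component of ${\rm End}(E\vert_C)$ has degree zero. Put $F:=E\vert_C$ and split off the trace, ${\rm End}(F)={\mathcal O}_C\oplus{\rm ad}(F)$, so that only the rank-$3$ bundle ${\rm ad}(F)={\rm Sym}^2F\otimes(\det F)^{-1}$ needs attention. Its Harder--Narasimhan filtration, induced by $L\vert_C\subset F$, has successive quotients $L\vert_C$, ${\mathcal O}_C$ and $L^*\vert_C$ of slopes $d>0>-d$; in particular its maximal destabilizing subbundle is the line bundle $L\vert_C$, and ${\rm ad}(F)$ is self-dual. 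A direct case analysis on a rank-$3$ bundle with this Harder--Narasimhan type shows that every indecomposable component has degree zero as soon as the maximal destabilizing line bundle $L\vert_C$ is not a direct summand of ${\rm ad}(F)$.

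The hard part is exactly to rule this out, and here I would exploit the non-splitting of \eqref{e5} on $C$. Consider the subbundle $P_0\subset{\rm ad}(F)$ of trace-free endomorphisms of $F$ preserving $L\vert_C$; it fits in $0\to L\vert_C\to P_0\to{\mathcal O}_C\to 0$, in which $L\vert_C\subset P_0$ is again the maximal destabilizing line bundle of ${\rm ad}(F)$. If $L\vert_C$ were a direct summand of ${\rm ad}(F)$, the corresponding projection would restrict to a retraction of $L\vert_C\hookrightarrow P_0$ and hence split this sequence; but a splitting produces a global trace-free endomorphism of $F$ acting by distinct scalars on $L\vert_C$ and on $F/L\vert_C$, whose eigenspace decomposition splits $E\vert_C$ and forces $e\vert_C=0$, a contradiction. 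Therefore $L\vert_C$ is not a direct summand, every indecomposable component of ${\rm End}(E\vert_C)$ has degree zero, and $E\vert_C$ carries a connection.
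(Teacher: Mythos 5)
Your proposal is correct, and its second half (non-existence of a connection on $X$) is exactly the paper's argument: compute $c_2(\mathrm{End}(E))=4c_2(E)-c_1(E)^2=-c_1(L)^2\neq 0$ and invoke Atiyah's theorem that a bundle with a connection has vanishing rational Chern classes. The first half, however, takes a genuinely different route. Both proofs start identically, using \eqref{f2} to see that $H^1(X,L)\to H^1(C,L\vert_C)$ is injective, so \eqref{e5} remains non-split on $C$. From there the paper proves that $E\vert_C$ itself is indecomposable (any maximal-degree line summand would be forced into $L\vert_C$ and would split the sequence) and then quotes Azad--Biswas \cite{AB}: the $\mathrm{PGL}(2,\mathbb{C})$-bundle associated to an indecomposable rank-2 bundle on a curve admits a connection, and $\mathrm{End}(E\vert_C)$, being associated to it via the adjoint representation, inherits one. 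You instead verify Weil's criterion directly for $\mathrm{End}(E\vert_C)$: split off the trace, identify the Harder--Narasimhan quotients of $\mathrm{ad}(E\vert_C)$ as $L\vert_C$, $\mathcal{O}_C$, $L^*\vert_C$, reduce to showing $L\vert_C$ is not a direct summand, and exclude that by the retraction/eigenspace argument, which is an endomorphism-level rerun of the paper's indecomposability argument. Your route is more self-contained, using only the Weil--Atiyah criterion already quoted in the introduction rather than the principal-bundle machinery of \cite{AB}; the price is the rank-3 case analysis, which you assert rather than carry out. That analysis is correct but has exactly one non-obvious case, and you should make it explicit: a decomposition $A\oplus B$ with $A$ a line bundle of degree $-\deg(L\vert_C)$ and $B$ indecomposable of rank 2 with HN slopes $\deg(L\vert_C)$ and $0$ is not excluded by the summand hypothesis alone; here self-duality gives $A\oplus B\cong A^*\oplus B^*$, and Atiyah's Krull--Schmidt theorem (uniqueness of decomposition into indecomposables) forces $A^*\cong A$ or $A^*\cong B$, both impossible by degree and rank. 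With that case written out, your proof is complete.
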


\begin{proof}
Take any smooth closed curve $C\, \subset\, X$. So
$C \,\in\, |{\mathcal O}_X(d)|$ with $d$ positive.
Consider the restriction homomorphism $H^1(X,\,L) \,\longrightarrow\,
H^1(C,\,L\vert_C)$. Using the long exact sequence of cohomologies associated to
$$
0\,\longrightarrow\, L\otimes {\mathcal O}_X(-d)\,\longrightarrow\,
L \,\longrightarrow\, L\vert_C \,\longrightarrow\, 0
$$
we conclude that its
kernel is $H^1(X,\, L\otimes {\mathcal O}_X(-d))$, which 
is zero by \eqref{f2}. In particular, the extension class for \eqref{e5} has a nonzero image
in $H^1(C,\,L\vert_C)$. Therefore, 
the restriction of the exact sequence \eqref{e5} to $C$ does not split.

We will show that $E\vert_C$ is indecomposable. 

Assume that $E\vert_C\,=\, L_1\oplus L_2$ with $\text{degree}(L_1)\, \geq\,
\text{degree}(L_2)$. Since $\text{degree}(E\vert_C)\,=\, \text{degree}(L\vert_C)
\, >\, 0\,=\, \text{degree}({\mathcal O}_C)$, the composition
$$
L_1\, \hookrightarrow\, E\vert_C \, \longrightarrow\, {\mathcal O}_C
$$
is the zero homomorphism. Hence $L_1$ coincides with the subbundle
$L\vert_C\, \subset\, E\vert_C$. This contradicts the earlier observation that the 
restriction of the exact sequence \eqref{e5} to $C$ does not split. Hence we
conclude that $E\vert_C$ is indecomposable.

Consider the projective bundle ${\mathbb P}(E\vert_C)\, \longrightarrow\, C$. Let 
$E_{\text{PGL}(2)}\, \longrightarrow\, C$ be the principal $\text{PGL}(2, {\mathbb 
C})$--bundle corresponding to it. Since $E$ is indecomposable, it follows that 
$E_{\text{PGL}(2)}$ admits an algebraic connection \cite[p.~342, Theorem~4.1]{AB}.
The vector bundle $\text{End}(E\vert_C)\, \longrightarrow\, C$ is associated
to $E_{\text{PGL}(2)}$ for the adjoint action of $\text{PGL}(2, {\mathbb
C})$ on $\text{End}_{\mathbb C}({\mathbb C}^2)\,=\, \text{M}(2, {\mathbb C})$.
Therefore, a connection on $E_{\text{PGL}(2)}$ induces a connection on
the vector bundle $\text{End}(E\vert_C)$. Hence, we conclude that
$\text{End}(E\vert_C)\,=\, \text{End}(E)\vert_C$ admits an algebraic connection.

On the other hand, $c_2(\text{End}(E))\,=\, - c_1(L)^2\, \not=\, 0$. This implies
that the vector bundle $E$ on $X$ does not admit a connection \cite[Theorem~4]{At}.
\end{proof}

\subsection{Special cases with positive answer}

Let $S$ be a smooth complex projective curve, $X$ a smooth complex projective variety and
$p\, :\, X \,\longrightarrow S$ a smooth surjective morphism such that every fiber of
$p$ is rationally connected. Assume that there is a smooth closed curve
$\widetilde{S}\, \subset\, X$ such that the restriction
$$
p\vert_{\widetilde{S}}\, :\, \widetilde{S}\, \longrightarrow\, S
$$
is an \'etale morphism.

\begin{lemma}\label{lem1}
Let $E$ be a vector bundle on $X$ whose restriction to every smooth curve on
$X$ admits a connection. Then $E$ admits a connection. 
\end{lemma}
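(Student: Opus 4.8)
The plan is to prove that a connection exists on $E$ by using the Atiyah exact sequence \eqref{e1} and showing that the obstruction to splitting it vanishes. Recall that a connection on $E$ is a splitting of
$$
0\,\longrightarrow\, \text{End}(E) \,\longrightarrow\,
\text{At}(E)\,\stackrel{\sigma}{\longrightarrow}\, TX \,\longrightarrow\,0\, ,
$$
and the obstruction to the existence of such a splitting is the extension class
$$
\alpha\,\in\, \text{Ext}^1_{{\mathcal O}_X}(TX,\, \text{End}(E))\,=\,
H^1(X,\, \text{End}(E)\otimes \Omega^1_X)\, .
$$
So the whole problem reduces to showing that this single cohomology class $\alpha$ vanishes. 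The strategy is to exploit the fibration structure $p\,:\,X\,\longrightarrow\, S$ together with the étale multisection $\widetilde S$ to recover $\alpha$ from the restriction of $E$ to curves, where connections are assumed to exist.

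The key point is to relate the cohomology on $X$ to cohomology along fibers and along $\widetilde S$. First I would decompose $\Omega^1_X$ using the relative cotangent sequence associated to $p$,
$$
0\,\longrightarrow\, p^*\Omega^1_S \,\longrightarrow\, \Omega^1_X
\,\longrightarrow\, \Omega^1_{X/S} \,\longrightarrow\, 0\, ,
$$
which splits the obstruction into a ``relative'' piece lying in $H^1(X,\, \text{End}(E)\otimes \Omega^1_{X/S})$ and a ``horizontal'' piece governed by $H^1(X,\, \text{End}(E)\otimes p^*\Omega^1_S)$. For the relative piece, the crucial input is that the fibers of $p$ are rationally connected: on a rationally connected variety the cohomology $H^1$ with coefficients in the endomorphism bundle tensored with the relative cotangent bundle can be controlled, and one expects via Leray or base change that the relevant relative obstruction pushes forward to something that vanishes or is detected by restriction to a general curve. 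For the horizontal piece, I would use the étale curve $\widetilde S$: since $p\vert_{\widetilde S}$ is étale, $p^*\Omega^1_S$ restricts on $\widetilde S$ to a bundle isomorphic (up to the étale pullback) to $\Omega^1_{\widetilde S}$, so a connection on $E\vert_{\widetilde S}$ directly kills the horizontal component of $\alpha$ along $\widetilde S$.

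Concretely, the mechanism I would use is that $\alpha$ restricts to the obstruction class of $E\vert_C$ for any smooth curve $C$, and the existence of connections on all such restrictions means every such restriction of $\alpha$ vanishes. The task is then to argue that $\alpha$ itself is determined by these restrictions, i.e.\ that the restriction maps
$$
H^1(X,\, \text{End}(E)\otimes \Omega^1_X) \,\longrightarrow\,
\bigoplus_C H^1(C,\, (\text{End}(E)\otimes \Omega^1_X)\vert_C)
$$
are jointly injective on the relevant class, for a suitable family of curves including fibers of $p$ and the multisection $\widetilde S$. Fibers of $p$ handle the vertical directions and $\widetilde S$ handles the horizontal direction, and rational connectedness of the fibers is what forces the vertical contributions to be rigid enough that vanishing on fibers propagates to global vanishing.

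I expect the main obstacle to be precisely this last injectivity/descent step: showing that the global class $\alpha$ is \emph{reconstructed} from its restrictions rather than merely restricting to zero on each individual curve. A class can restrict to zero on every member of a family yet be nonzero globally, so the argument must use the geometry of the fibration essentially -- the rational connectedness of the fibers (to kill $H^1$ in the fiber direction and make the vertical obstruction disappear or be rigidly determined) combined with the étale section $\widetilde S$ (to supply a genuine splitting in the base direction). Balancing these two inputs so that together they cover all of $\Omega^1_X$, and handling the interaction term coming from the non-splitting of the relative cotangent sequence, is where the real work lies.
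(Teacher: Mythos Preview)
Your proposal is a strategy outline rather than a proof, and you yourself flag the unresolved step: you have no mechanism to conclude that the Atiyah class $\alpha$ vanishes from knowing that its restriction to every curve vanishes. That worry is well-founded --- a cohomology class can die on every member of a covering family yet survive globally --- and nothing in your sketch supplies the needed injectivity. The vague assertion that rational connectedness lets one ``control'' $H^1$ of $\text{End}(E)\otimes\Omega^1_{X/S}$ is not a proof, and the interaction term you mention at the end is left entirely open.

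The paper's argument bypasses the obstruction-theoretic analysis by extracting a much stronger consequence of the hypothesis. For any smooth rational curve $\iota:\mathbb{P}^1\hookrightarrow X$, the restriction $\iota^*E$ admits a connection; any connection on a curve is flat, and $\mathbb{P}^1$ is simply connected, so $\iota^*E$ is \emph{trivial}. Applying this to the rational curves in a fiber $X_s$ and invoking \cite{BdS}, one concludes that $E\vert_{X_s}$ is trivial for every $s\in S$. Hence $E=p^*p_*E$, and the problem collapses to constructing a connection on the vector bundle $p_*E$ on the curve $S$. Since $(p\vert_{\widetilde S})^*p_*E=E\vert_{\widetilde S}$ admits a connection by hypothesis, averaging that connection over the fibers of the \'etale cover $p\vert_{\widetilde S}:\widetilde S\to S$ yields a connection on $p_*E$, and pulling back gives one on $E$.

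The insight you are missing is that rational connectedness, combined with the hypothesis, does far more than tame some $H^1$ in the fiber direction: it forces $E$ to be \emph{constant} along fibers. This eliminates the relative piece of your decomposition outright and reduces the horizontal piece to a one-dimensional problem on $S$, where no delicate injectivity argument is required at all.
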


\begin{proof}
Let $Y$ be a smooth complex projective rationally connected variety and $V$ a vector 
bundle on $Y$, such that for every smooth rational curve ${\mathbb C}{\mathbb P}^1 
\, \stackrel{\iota}{\hookrightarrow}\, Y$ the restriction $\iota^*V$ has a 
connection. Any connection on a curve is flat, and ${\mathbb C}{\mathbb P}^1$ is 
simply connected, so the above vector bundle $\iota^*V$ is trivial. This implies 
that the vector bundle $V$ is trivial \cite[Proposition~1.2]{BdS}.

{}From the above observation it follows that $E\,=\, p^*p_*E$. Therefore, it
suffices to show that $p_*E$ admits a connection. Now, by the given condition,
the vector bundle $(p\vert_{\widetilde{S}})^*p_*E\, =\, E\vert_{\widetilde{S}}$ admits
a connection. Fix a connection $D$ on $E\vert_{\widetilde{S}}$. Averaging $D$ over the
fibers of $p$ we get a connection on $p_*E$. This completes the proof.
\end{proof}

\section*{Acknowledgements}

We are very grateful to Jason Starr for his generous help. We thank the
referee heartily for going through the paper carefully and providing comments
to improve the exposition.
The first-named author is supported by a J. C. Bose Fellowship.


\end{document}